\newcommand{\bS}{\mathbb S}
\newcommand{\NN}{\mathbb N}
\newcommand{\RR}{\mathbb R}
\newcommand{\ZZ}{\mathbb Z}
\newcommand{\frakp}{\mathfrak p}
\newtheorem{theorem}{Theorem}
\newtheorem{proposition}{Proposition}
\newtheorem{corollary}[proposition]{Corollary}
\newtheorem*{question}{Question}
\newtheorem*{conjecture}{Conjecture}
\theoremstyle{definition}
\newtheorem{definition}[proposition]{Definition}
\theoremstyle{remark}
\newtheorem*{remark}{Remark}
\begin{document}

\title{Spherical conic metrics and realizability of branched covers}
\author{Xuwen Zhu\\Stanford University\\xuwenzhu@stanford.edu}
\date{}

\maketitle

\begin{abstract}
Branched covers between Riemann surfaces are associated with certain combinatorial data, and Hurwitz existence problem asks whether given data satisfying those combinatorial constraints can be realized by some branched cover. We connect recent development in spherical conic metrics to this old problem, and give a new method of finding exceptional (unrealizable) branching data. As an application, we find new infinite sets of exceptional branched cover data on the Riemann sphere.  
\end{abstract}

\section{Introduction}
Branched covers between Riemann surfaces is a rich subject that has a lot of interesting features. For two Riemann surfaces $M$ and $N$, a branched cover of degree $d$ is a non-constant holomorphic map $f:M\rightarrow N$ such that away from finitely many points $f$ is a degree $d$ topological covering map. Near each ramification point $q\in M$ there are local holomorphic coordinates such that $f$ can be written locally as $f(z)=z^{e_{q}}$, where $e_{q}\in \NN$ is the local ramification index. 

The Riemann--Hurwitz theorem gives the connection between the degree and the ramification indices: 
\begin{equation}\label{e:rh}
\chi(M)=d\cdot \chi(N)-\sum_{q\in M} (e_{q}-1).
\end{equation}
These ramification indices can be further grouped into the following data. Let $p_{1},\dots p_{n} \in N$ be the branching points, which are the images of the ramification points under $f$. For each $p_{i}$, its pre-image under $f$ contains finitely many points $q_{i}^{1}, \dots, q_{i}^{\ell_{i}}\in M$, and near each $q_{i}^{j}$ the map $f$ can be written as $f(z)=z^{\Pi_{i}^{j}}$ where $\Pi_{i}^{j}\in \NN$ denotes the {\em local ramification index}. Since $p_{i}$ is a branching point, at least one of the $\Pi_{i}^{j}$'s should be greater than 1. By the topological degree counting argument, we have 
\begin{equation}\label{e:piSum}
\sum_{j=1}^{\ell_{i}}\Pi_{i}^{j}=d, \ \forall i=1,\dots, n.
\end{equation}
It is easy to see that the set $\{q_{i}^{j}: i=1,\dots, n, \ j=1, \dots, \ell_{i}\} \subset N$ is exactly the set of ramification points. So the Riemann--Hurwitz formula~\eqref{e:rh} can be rewritten as
\begin{equation}\label{e:rh2}
\chi(M)=d\cdot \chi(N)-\sum_{i=1}^{n} \sum_{j=1}^{\ell_{i}} (\Pi_{i}^{j}-1).
\end{equation}
 
\begin{figure}[h]
\includegraphics[width=\textwidth]{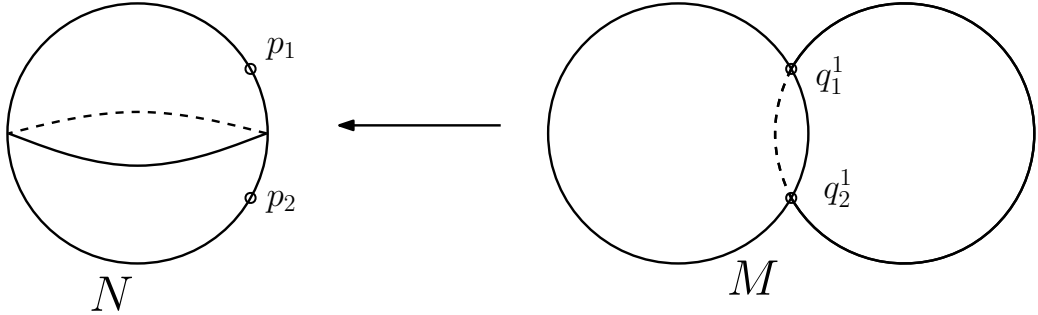}
 \caption{An example of a degree-2 branched cover $\bS^{2}\rightarrow \bS^{2}$, with branched data $\Pi=[(2),(2)]$.}
 \label{f:doublecover}
\end{figure}
 
The famous \textbf{Hurwitz existence problem} asks the following question:
\begin{question}
Given two Riemann surfaces $M$ and $N$, and the following set of data: 
\begin{itemize}
\item degree $d\in \NN$;
\item $\{\Pi=[(\Pi_{1}^{1}, \dots, \Pi_{1}^{\ell_{1}}), \ \dots, \ (\Pi_{i}^{1}, \dots, \Pi_{i}^{\ell_{i}}), \ \dots, \ (\Pi_{n}^{1}, \dots, \Pi_{n}^{\ell_{n}})]\in \NN^{\sum_{i=1}^{n}\ell_{i}}\}$ such that for each $i$, there exists at least one $j$ such that $\Pi_{i}^{j}\geq 2$,
\end{itemize}
where $(d, \Pi)$ 
satisfy the conditions~\eqref{e:piSum} and~\eqref{e:rh2}, does there exist a branched cover $f: M\rightarrow N$ that realizes this data?
\end{question}
This question dates back to Hurwitz~\cite{Hurwitz}, and has seen a lot of interesting developments. In the case when $\chi(N)\leq 0$, it is understood that all data is realizable, by the work of Edmonds--Kulkarni--Stong~\cite{EKS}. However, when $N$ is $\bS^{2}$, this problem is much more complicated. It is well known that there exists \textbf{exceptional} data $(d, \Pi)$ which is not realizable by a branched cover, but so far it is still a widely open problem to characterize all such data. Therefore, it is interesting to find new types of exceptional data, in the hope of giving more insight for finding a universal criterion.

In this work, we concentrate on the case when both $M$ and $N$ are $\bS^{2}$ and construct new types of exceptional data via the connection to spherical conical metrics. We give the following criterion which uses the notion of {\em admissible cone angle set} introduced in Definition~\ref{d:angle} below.
\begin{theorem}\label{t:main}
Let $(d, \Pi)$ be a set of data satisfying~\eqref{e:piSum} and~\eqref{e:rh2}. Assume there exists an admissible cone angle set $\vec \beta=(\beta_{1}, \dots, \beta_{n})\in (0,\infty)^{n}$ such that the associated vector 
$$\vec \beta \cdot \Pi:=(\Pi_{1}^{1}\beta_{1}, \ \dots, \ \Pi_{1}^{j}\beta_{1},\ \dots,\ \Pi_{i}^{j}\beta_{i},\ \dots,\ \Pi_{n}^{\ell_{n}}\beta_{n})\in (0,\infty)^{\sum_{i=1}^{n} \ell_{i}}$$
is \textbf{not} admissible by Definition~\ref{d:angle}. Then $(d, \Pi)$ is exceptional, i.e., it cannot be realized by a branched cover of $\bS^{2}$ over $\bS^{2}$.
\end{theorem}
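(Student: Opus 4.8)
The plan is to argue by contrapositive, using the three basic facts about holomorphic branched covers and conic metrics: the local normal form $f(z)=z^{e_q}$ near a ramification point, the invariance of constant curvature $+1$ under holomorphic pullback (away from critical points), and the multiplicativity of cone angles under a substitution $w=z^{k}$. Concretely, supposing $(d,\Pi)$ were realized by a branched cover $f\colon\bS^{2}\to\bS^{2}$, I would pull back a spherical conic metric with cone data $\vec\beta$ along $f$ and obtain a spherical conic metric with cone data $\vec\beta\cdot\Pi$ on $\bS^{2}$, forcing $\vec\beta\cdot\Pi$ to be admissible in the sense of Definition~\ref{d:angle} --- contradicting the hypothesis.

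So suppose $f\colon\bS^{2}\to\bS^{2}$ realizes $(d,\Pi)$, with branch points $p_{1},\dots,p_{n}$ and, over each $p_{i}$, ramification points $q_{i}^{1},\dots,q_{i}^{\ell_{i}}$ carrying local indices $\Pi_{i}^{1},\dots,\Pi_{i}^{\ell_{i}}$. Since admissibility constrains only the (ordered) multiset of cone angles, I would fix a spherical conic metric $g$ on the target $\bS^{2}$ with cone points precisely at $p_{1},\dots,p_{n}$ and cone angle $2\pi\beta_{i}$ at $p_{i}$. If one prefers the reading of Definition~\ref{d:angle} that only asserts existence of such a metric for \emph{some} configuration of $n$ points, one first notes that realizability of $(d,\Pi)$ is a purely combinatorial condition, so a realizing cover can be chosen with branch locus at any prescribed $n$-tuple of distinct points of $\bS^{2}$ --- deform the branch points along a path in the connected configuration space $\mathrm{Conf}_{n}(\bS^{2})$ and lift, using that the branch-locus map on the finite set of degree-$d$ covers with $n$ branch points is a covering map --- and then takes the branch points of $f$ to coincide with the cone points of $g$.

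Next I would set $h:=f^{*}g$ and claim that $h$ is a spherical conic metric on the source $\bS^{2}$ whose cone points are exactly the $q_{i}^{j}$, with cone angle $2\pi\Pi_{i}^{j}\beta_{i}$ at $q_{i}^{j}$; this is precisely the content that produces the vector $\vec\beta\cdot\Pi$. The verification is local and splits in two cases. Away from $f^{-1}(\{p_{1},\dots,p_{n}\})$ the map $f$ is a local biholomorphism; writing $g=e^{2\varphi}\,|dw|^{2}$ in a local holomorphic coordinate $w$ on the target, one gets $h=e^{2\psi}\,|dz|^{2}$ with $\psi=\varphi\circ f+\log|f'|$, and since $\log|f'|$ is harmonic where $f'\neq0$ while $\Delta(\varphi\circ f)=|f'|^{2}\,(\Delta\varphi)\circ f$ by conformality of $f$, one computes $K_{h}=-e^{-2\psi}\Delta\psi=(K_{g})\circ f\equiv1$, so $h$ is a smooth metric of curvature $1$ there. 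Near a point $q_{i}^{j}$, choose holomorphic coordinates $z$ centered at $q_{i}^{j}$ and $w$ centered at $p_{i}$ in which $f(z)=z^{\Pi_{i}^{j}}$; if $\varphi-(\beta_{i}-1)\log|w|$ is bounded near $w=0$ (the cone condition for $g$), then $\psi-(\Pi_{i}^{j}\beta_{i}-1)\log|z|$ is bounded near $z=0$, so $h$ has a conic singularity of angle $2\pi\Pi_{i}^{j}\beta_{i}$ at $q_{i}^{j}$ --- the case $\Pi_{i}^{j}=1$ merely transplanting the cone of $g$ at $p_{i}$ to the unramified preimage. Together with the curvature computation on the punctured neighborhood, this proves the claim, so $h$ realizes the cone-angle vector $\vec\beta\cdot\Pi$, hence $\vec\beta\cdot\Pi$ is admissible, contrary to assumption; therefore no such $f$ exists and $(d,\Pi)$ is exceptional.

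I expect the two points that need care to be: (i) matching the cone-point positions in the second paragraph, i.e. the configuration-space and lifting argument, unless Definition~\ref{d:angle} is phrased so as to make this step automatic; and (ii) turning the local analysis at the cone points into a rigorous statement that $f$ introduces no spurious curvature concentration and no extra conic points --- this is cleanest through the distributional Gauss--Bonnet identity, or through the developing map of $g$ composed with $f$. As a consistency check, Riemann--Hurwitz~\eqref{e:rh2} forces $\vec\beta$ and $\vec\beta\cdot\Pi$ to satisfy the same Gauss--Bonnet relation on $\bS^{2}$, but this observation plays no logical role in the argument.
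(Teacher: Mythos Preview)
Your proof is correct and follows essentially the same route as the paper: pull back the spherical conic metric $g$ along the putative branched cover, verify locally (via $f(z)=z^{\Pi_i^j}$ and the conformal expression~\eqref{e:conformal}) that $f^*g$ is a spherical conic metric on $\bS^2$ with cone data $\vec\beta\cdot\Pi$, and derive a contradiction with non-admissibility. The only minor difference is in the position-matching step: the paper splits into cases, invoking Proposition~\ref{p:position} when all $\beta_i<1$ and Proposition~\ref{p:Riemann} (Riemann's existence theorem) otherwise, whereas you handle it uniformly via the configuration-space lifting argument --- which is exactly the content of Proposition~\ref{p:Riemann} and in fact renders the case split unnecessary.
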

 
As an application of the result above, we provide three kinds of exceptional data. We remark that Proposition~\ref{p:2k} is a special case of~\cite[Theorem~1.5]{PP1} and Proposition~\ref{p:rk} is a special case of~\cite[Theorem~1.4]{PP1}, with the results of~\cite{PP1} obtained using a different approach than the one used here. Proposition~\ref{p:3k} appears to be a new example of exceptional data.
\begin{proposition}\label{p:2k}
If $d=2k$ for $k\geq 2$, then the following set $(d, \Pi)$ is exceptional:
\begin{itemize}
\item $\Pi=[(k_{1},k_{2}), \ \underbrace{(2, \dots, 2)}_{k}, \ \underbrace{(2, \dots, 2)}_{k}]$, where $k_{1}+k_{2}=2k$ and $k_{1}\neq k_{2}$.
\end{itemize}
If in addition $k\geq 3$, then we also have the following exceptional $(d, \Pi)$:
\begin{itemize}	
\item $\Pi=[\underbrace{(2, \dots, 2)}_{k}, \ (\underbrace{2, \dots, 2}_{j_{1}}, 2k-2j_{1}), \ (\underbrace{2, \dots, 2}_{j_{2}}, 2k-2j_{2})]$, where $j_{1}+j_{2}=k$, $j_{1}\neq j_{2}$, and $j_{1}, j_{2}\geq 1$.
\end{itemize}
\end{proposition}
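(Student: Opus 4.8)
The plan is to prove each case by invoking Theorem~\ref{t:main}: I will exhibit an explicit admissible cone angle set $\vec\beta=(\beta_{1},\beta_{2},\beta_{3})$ (so $n=3$ in both cases) for which the vector $\vec\beta\cdot\Pi$ fails to be admissible. I use the normalization in which the cone angle at $p_{i}$ equals $2\pi\beta_{i}$, so that $\beta_{i}=1$ records a smooth (removable) point; in particular the admissibility of a cone angle vector is unaffected by inserting or deleting entries equal to $1$. The sets $\vec\beta$ I use are as simple as possible — cone angle data on $\bS^{2}$ realized by doubling an explicit spherical triangle across its boundary — so that admissibility is immediate. The non-admissibility of $\vec\beta\cdot\Pi$ will follow, after the entries equal to $1$ are deleted, from the rigidity of spheres with very few cone points: a sphere with exactly two cone points carries a spherical metric only when the two cone angles are equal (Troyanov), and a sphere with a single cone point of angle $\neq2\pi$ carries no spherical metric at all (Riemann--Hurwitz).

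For the first family, $\Pi=[(k_{1},k_{2}),\ \underbrace{(2,\dots,2)}_{k},\ \underbrace{(2,\dots,2)}_{k}]$ with $k_{1}+k_{2}=2k$ and $k_{1}\neq k_{2}$, I take $\vec\beta=(\tfrac{1}{2k},\tfrac{1}{2},\tfrac{1}{2})$. This $\vec\beta$ is admissible: the metric with cone angles $\tfrac{\pi}{k},\pi,\pi$ is the double, across all three sides, of the spherical triangle with angles $\tfrac{\pi}{2k},\tfrac{\pi}{2},\tfrac{\pi}{2}$, which is one half of the spherical lune of opening angle $\tfrac{\pi}{2k}$ cut along a great circle. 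Since $k\geq2$ we have $\tfrac{1}{2k}<1$, so $\vec\beta$ records three genuine cone points. Now $\vec\beta\cdot\Pi=(\tfrac{k_{1}}{2k},\tfrac{k_{2}}{2k},\underbrace{1,\dots,1}_{2k})$; deleting the $2k$ unit entries leaves the two-cone datum $(\tfrac{k_{1}}{2k},\tfrac{k_{2}}{2k})$, whose two entries lie in $(0,1)$ and are distinct because $k_{1}\neq k_{2}$. Hence $\vec\beta\cdot\Pi$ is not admissible, and Theorem~\ref{t:main} shows that $(d,\Pi)$ is exceptional.

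For the second family, $\Pi=[\underbrace{(2,\dots,2)}_{k},\ (\underbrace{2,\dots,2}_{j_{1}},2k-2j_{1}),\ (\underbrace{2,\dots,2}_{j_{2}},2k-2j_{2})]$ with $j_{1}+j_{2}=k$, $j_{1}\neq j_{2}$ and $j_{1},j_{2}\geq1$, I take $\vec\beta=(\tfrac{1}{2},\tfrac{1}{2},\tfrac{1}{2})$. This is admissible: the metric with cone angles $(\pi,\pi,\pi)$ is the double of the trirectangular spherical triangle, i.e.\ one octant of the round $\bS^{2}$. Using $j_{1}+j_{2}=k$, one finds $\vec\beta\cdot\Pi=(\underbrace{1,\dots,1}_{k},\ \underbrace{1,\dots,1}_{j_{1}},\,j_{2},\ \underbrace{1,\dots,1}_{j_{2}},\,j_{1})$. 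If $j_{1},j_{2}\geq2$, deleting all unit entries leaves the two-cone datum $(j_{2},j_{1})$, two cone points of distinct integer orders, hence not admissible. In the boundary case $j_{1}=1$ one has $j_{2}=k-1\geq2$ (as $k\geq3$), and after deleting the unit entries only the single cone point of order $j_{2}=k-1\geq2$ survives, which is again not admissible; the case $j_{2}=1$ is symmetric, and $j_{1}=j_{2}=1$ is excluded. So $\vec\beta\cdot\Pi$ is never admissible and Theorem~\ref{t:main} applies. The hypothesis $k\geq3$ is needed only so that the constraints $j_{1}+j_{2}=k$, $j_{1}\neq j_{2}$, $j_{1},j_{2}\geq1$ admit a solution.

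I do not expect a serious obstacle. The one genuinely creative step — producing $\vec\beta$ — has a very simple answer (all three cone angles equal to $\pi$, or a thin lune together with two angles $\pi$), and the rest is bookkeeping. The point requiring care is to exhaust the degenerate sub-cases in which a block of $2$'s contributes a cone of angle exactly $2\pi$: one must verify that deleting those smooth points always leaves an inadmissible configuration, and this is ensured precisely by the hypotheses $k_{1}\neq k_{2}$ and $j_{1}\neq j_{2}$. All of the analytic content is already packaged inside Theorem~\ref{t:main} — the pullback of a spherical conic metric under a branched cover — so the only external facts invoked here are classical: the existence of these doubled-triangle spherical metrics, and the non-realizability on $\bS^{2}$ of a single cone point or of two cone points with distinct angles.
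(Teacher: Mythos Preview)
Your proof is correct and follows essentially the same approach as the paper: apply Theorem~\ref{t:main} with an explicit three-point cone angle vector $\vec\beta$ so that $\vec\beta\cdot\Pi$, after deleting the entries equal to $1$, reduces to two unequal cone angles (or a single cone point), which is inadmissible by Corollary~\ref{c:football}. The only difference is cosmetic: for the first family the paper uses $\vec\beta=(\tfrac12,\tfrac12,\tfrac12)$ throughout rather than your $(\tfrac{1}{2k},\tfrac12,\tfrac12)$, obtaining the shortened vector $(\tfrac{k_1}{2},\tfrac{k_2}{2})$ instead of $(\tfrac{k_1}{2k},\tfrac{k_2}{2k})$; your choice has the mild advantage that neither entry can equal $1$, so no sub-case analysis is needed, and your explicit treatment of the boundary case $j_1=1$ (or $j_2=1$) in the second family is likewise a bit more careful than the paper's.
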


\begin{proposition}\label{p:3k}
When $d=3k$ where $k$ is odd and $k\geq 3$, the following set $(d, \Pi)$ is exceptional:
\begin{itemize}
\item $\Pi=[(k-2, \underbrace{2, \dots, 2}_{k+1}), \ (\underbrace{3, \dots, 3}_{k}), \ (\underbrace{3, \dots, 3}_{k})]$.
\end{itemize}
\end{proposition}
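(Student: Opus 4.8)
The plan is to apply Theorem~\ref{t:main} with the explicit choice $\vec\beta=\big(\tfrac12,\tfrac13,\tfrac13\big)$, tuned so that multiplication by $\Pi$ collapses almost every twisted cone angle to $1$. Indeed $\Pi_2^j\beta_2=\Pi_3^j\beta_3=3\cdot\tfrac13=1$ for all $j$, and $\Pi_1^j\beta_1=2\cdot\tfrac12=1$ for $j\geq 2$, so the only surviving entry is $\Pi_1^1\beta_1=\tfrac{k-2}{2}$ and
\[\vec\beta\cdot\Pi=\Big(\tfrac{k-2}{2},\ \underbrace{1,\dots,1}_{3k+1}\Big)\in(0,\infty)^{3k+2}.\]

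First I would verify that $\vec\beta$ is admissible by Definition~\ref{d:angle}. All three entries lie in $(0,1)$, one has $\sum_i(1-\beta_i)=\tfrac12+\tfrac23+\tfrac23=\tfrac{11}{6}\in(0,2)$, and $(1-\beta_k)<\sum_{i\ne k}(1-\beta_i)$ holds for each $k$; equivalently, the spherical triangle with angles $\big(\tfrac\pi2,\tfrac\pi3,\tfrac\pi3\big)$ exists (these satisfy the spherical triangle inequalities), and its double is a spherical metric on $\bS^2$ with cone angles $2\pi\vec\beta$. So $\vec\beta$ is admissible.

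Next I would show that $\vec\beta\cdot\Pi$ is \emph{not} admissible. All but one of its entries equal $1$, i.e.\ represent cone angle $2\pi$ (smooth points), so $\vec\beta\cdot\Pi$ prescribes a spherical metric on $\bS^2$ with exactly one genuine conical point, of angle $\pi(k-2)$. Since $k$ is odd, $k-2$ is an odd positive integer, hence $\tfrac{k-2}{2}\notin\ZZ$ and $\pi(k-2)$ is not an integer multiple of $2\pi$; but $\bS^2$ carries no spherical metric with a single conical point of angle $\neq 2\pi$ (the developing map would be a rational self-map of $\bS^2$ with a single branch point, necessarily of non-integer order, which is impossible). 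Hence $\vec\beta\cdot\Pi$ is not admissible, and Theorem~\ref{t:main} gives that $(d,\Pi)$ is exceptional.

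The real content is the choice of $\vec\beta$; once it is written down the target side is a classical triangle-existence statement and the source side is the non-existence of spherical teardrops, so neither check is an obstacle. The hypothesis that $k$ is odd is used precisely to ensure $\tfrac{k-2}{2}\notin\ZZ$ (for even $k$ this fails; e.g.\ $k=4$ gives $\tfrac{k-2}{2}=1$, making $\vec\beta\cdot\Pi$ the smooth sphere), while $k\geq 3$ guarantees $\tfrac{k-2}{2}>0$, so the surviving point is honestly conical.
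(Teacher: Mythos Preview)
Your argument is correct, and it takes a genuinely different route from the paper's. The paper chooses $\vec\beta=(\tfrac12,\tfrac23,\tfrac23)$; the lifted data then shortens to $\vec\alpha=(\tfrac{k-2}{2},\underbrace{2,\dots,2}_{2k})$, and non-admissibility is checked via the Mondello--Panov holonomy constraint: one exhibits an odd integer point at $\ell^1$-distance $\tfrac12$ from $\vec\alpha-\vec 1$. Your choice $\vec\beta=(\tfrac12,\tfrac13,\tfrac13)$ is sharper: every entry of $\vec\beta\cdot\Pi$ collapses to $1$ except the single value $\tfrac{k-2}{2}$, and the obstruction becomes the elementary fact that $\bS^2$ admits no spherical teardrop. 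This is cleaner and avoids invoking the Mondello--Panov inequality altogether; the paper's choice, on the other hand, stays within the $n\geq 2$ regime where Definition~\ref{d:angle} is literally stated, whereas you (correctly) step outside it and argue non-existence directly. One small point: your parenthetical justification via the developing map is slightly garbled (a rational map cannot have non-integer local degree to begin with), but the conclusion---no spherical metric on $\bS^2$ with a single cone point of angle $\neq 2\pi$---is standard and your overall argument stands.
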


\begin{proposition}\label{p:rk}
When $d=rk$ where $r\geq 2$ and $k\geq 2$, the following two sets $(d, \Pi)$ are exceptional:
\begin{itemize}
\item $\Pi=[(2k-1, \underbrace{1, \dots, 1}_{(r-2)k+1}), \ (\underbrace{r, \dots, r}_{k}), \ (\underbrace{r, \dots, r}_{k})]$.
\item $\Pi=[(j_{1},\ j_{2},\ \underbrace{1, \dots, 1}_{(r-2)k}), \ (\underbrace{r, \dots, r}_{k}), \ (\underbrace{r, \dots, r}_{k})]$, where $j_{1}\neq j_{2}$ and $j_{1}+j_{2}=2k$. 
\end{itemize}
\end{proposition}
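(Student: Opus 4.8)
The plan is to apply Theorem~\ref{t:main}. Here $N=M=\bS^{2}$ and the branching data has $n=3$ blocks, so I must exhibit an admissible cone angle vector $\vec\beta=(\beta_{1},\beta_{2},\beta_{3})\in(0,\infty)^{3}$ for which $\vec\beta\cdot\Pi$ is \emph{not} admissible. In both families the second and third blocks are $(\underbrace{r,\dots,r}_{k})$, which suggests taking $\beta_{2}=\beta_{3}=\tfrac1r$: then every entry of $\vec\beta\cdot\Pi$ produced by those two blocks equals $1$, i.e.\ becomes a smooth marked point carrying no cone singularity. Taking in addition $\beta_{1}=1$ collapses all the ``$1$'' entries of the first block as well, so the cone data of $\vec\beta\cdot\Pi$ reduces to what comes from the nontrivial part of the first block: a single cone point of angle $2\pi(2k-1)$ in the first family, and two cone points of angles $2\pi j_{1}$ and $2\pi j_{2}$ in the second family.

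I would first check that $\vec\beta=(1,\tfrac1r,\tfrac1r)$ is admissible. Since $\beta_{1}=1$ there is no actual singularity at the first marked point, so this configuration is that of two cone points of equal angle $2\pi/r$ on $\bS^{2}$, which is realized by the spindle (football) metric and hence admissible by Definition~\ref{d:angle}. Should Definition~\ref{d:angle} forbid an entry equal to $1$, I would instead take $\beta_{1}$ close to $1$ but non-integral, so that $(\beta_{1},\tfrac1r,\tfrac1r)$ still lies in the admissible locus --- a slightly perturbed football --- while $\vec\beta\cdot\Pi$ stays non-admissible by openness; I would carry out this perturbation only if the definition requires it.

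Next I would verify that $\vec\beta\cdot\Pi$ is not admissible. For the first family the data of $\vec\beta\cdot\Pi$ is a spherical metric on $\bS^{2}$ with exactly one conical point, of angle $2\pi(2k-1)$ with $2k-1\ge 3$; since $\bS^{2}$ minus one point is simply connected, the holonomy of such a metric is trivial, so its developing map extends to a nonconstant rational self-map of $\bS^{2}$ ramified over a single value, which is impossible by Riemann--Hurwitz --- hence this data is non-admissible. For the second family the data of $\vec\beta\cdot\Pi$ is a metric with exactly two conical points of integer angles $2\pi j_{1}$ and $2\pi j_{2}$; the constraint $j_{1}+j_{2}=2k$ with $j_{1}\ne j_{2}$ forces these to be distinct, and if one of them equals $1$ we are back in the one-cone-point situation, while if both exceed $1$ the same developing-map argument produces a rational self-map of $\bS^{2}$ ramified over exactly two values, necessarily of the form $z\mapsto z^{\delta}$, which forces the two angles to coincide --- contradiction. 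Thus $\vec\beta\cdot\Pi$ is non-admissible in both families, and Theorem~\ref{t:main} shows that the corresponding $(d,\Pi)$ is exceptional. (As a consistency check, the second family with $r=2$ recovers the first item of Proposition~\ref{p:2k}.)

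The main obstacle I anticipate is purely a matter of matching these geometric non-existence statements to the literal content of Definition~\ref{d:angle}. If that definition is phrased in terms of existence of spherical conic metrics, the argument is as above; if it is instead a combinatorial Mondello--Panov-type angle inequality, the verification is still immediate, because after the substitution $\beta_{2}=\beta_{3}=\tfrac1r$, $\beta_{1}=1$ both $\vec\beta$ and $\vec\beta\cdot\Pi$ are configurations with at most two genuine cone points, for which the admissibility criterion is completely explicit. The only care needed is the bookkeeping of which $j_{i}$ may degenerate to $1$, and the minor perturbation argument in case an entry equal to $1$ is not permitted by the definition.
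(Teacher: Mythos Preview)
Your proposal is correct and follows essentially the same route as the paper: the paper also chooses $\vec\beta=(1,\tfrac1r,\tfrac1r)$, reduces $\vec\beta\cdot\Pi$ to the single entry $(2k-1)$ in the first case and to $(j_1,j_2)$ with $j_1\neq j_2$ in the second, and then concludes non-admissibility. The only differences are cosmetic: the paper cites Definition~\ref{d:angle}(c) and Corollary~\ref{c:football} directly rather than giving your developing-map argument, and your perturbation contingency is unnecessary since the paper explicitly permits entries equal to~$1$ (one simply deletes them before applying the admissibility test).
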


From the exceptional data constructed above, we get the following result regarding non-prime degrees, which was proved in~\cite{EKS} using a different method.
\begin{corollary}\label{c:nprime}
For every $d$ that is not a prime, there exists at least one set of data $(d, \Pi)$ that is exceptional.
\end{corollary}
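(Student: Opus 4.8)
The plan is to deduce Corollary~\ref{c:nprime} directly from Proposition~\ref{p:rk}. If $d$ is not prime, it admits a factorization $d = rk$ with integers $r \geq 2$ and $k \geq 2$. For such $r,k$ I would take the first family of data in Proposition~\ref{p:rk},
\[
\Pi = \bigl[(2k-1,\underbrace{1,\dots,1}_{(r-2)k+1}),\ (\underbrace{r,\dots,r}_{k}),\ (\underbrace{r,\dots,r}_{k})\bigr],
\]
which the proposition asserts is exceptional. So the only thing that needs checking is that this $(d,\Pi)$ is a legitimate instance of the Hurwitz existence problem for $M = N = \bS^{2}$, i.e., that it satisfies~\eqref{e:piSum} and~\eqref{e:rh2} and that each block really contains a ramification index $\ge 2$ (so that the data genuinely describes $n=3$ branch points).

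The verification is a short arithmetic check. The first block sums to $(2k-1) + ((r-2)k+1)\cdot 1 = rk = d$, and each of the two remaining blocks sums to $k\cdot r = d$, so~\eqref{e:piSum} holds. For~\eqref{e:rh2} with $\chi(\bS^{2}) = 2$ one computes
\[
\sum_{i=1}^{3}\sum_{j=1}^{\ell_i}(\Pi_i^j - 1) = (2k-2) + 0 + 2\,k(r-1) = 2rk - 2 = 2d - 2 = d\cdot 2 - 2,
\]
which is exactly~\eqref{e:rh2}. Finally $2k-1 \ge 3$ and $r \ge 2$, so all three blocks have an entry $\ge 2$. Proposition~\ref{p:rk} then applies verbatim and produces the desired exceptional $(d,\Pi)$.

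Since the conclusion is an immediate consequence of Proposition~\ref{p:rk} once the factorization $d=rk$ is fixed, there is no real obstacle here; the substantive work has already been absorbed into the proof of Proposition~\ref{p:rk}. The one place to be mildly careful is the degenerate-looking case $r = k = 2$ (i.e.\ $d=4$), where $(r-2)k+1 = 1$ and the data reads $[(3,1),(2,2),(2,2)]$; this is still covered by the hypotheses $r,k\ge 2$ of Proposition~\ref{p:rk} and recovers the classical smallest exceptional datum. One could equally well run the argument through Proposition~\ref{p:2k} whenever $d$ is even, but invoking Proposition~\ref{p:rk} handles all non-prime $d$ uniformly.
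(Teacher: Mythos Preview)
Your proof is correct and follows essentially the same strategy as the paper, namely reducing the corollary to the explicit exceptional families already constructed. The only difference is organizational: the paper splits into the cases $d=2k$ (invoking Proposition~\ref{p:2k}) and $d=rk$ with $r$ odd (invoking Proposition~\ref{p:rk}), whereas you observe---more economically, and as you yourself note at the end---that Proposition~\ref{p:rk} alone handles every composite $d$ uniformly.
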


The criterion in this paper is obtained through the connection with spherical conical metrics on $\bS^{2}$.
A spherical conical metric on a Riemann surface $M$ is a constant curvature one (``spherical'') metric, smoothly defined except finitely many cone points. Near any cone point the metric is asymptotic to a cone of certain angle. When the underlying manifold $M$ is $\bS^{2}$, there has been a lot of recent developments regarding the existence and uniqueness of such metrics. In particular, the cone angles need to satisfy certain conditions in order for such metrics to exist~\cite{Dey, Ere1, Ka, MP}. In~\S\ref{s:sph} we review those results and discuss the existence of spherical conical metrics depending on the conical data. Then in~\S\ref{s:cover} we connect the cone angle set of such metrics to branched covers of $\bS^{2}$, and obtain the testing criterion described in the main theorem.

In the search of exceptional branch data, there have been a lot of different approaches, see for example~\cite{PP3} for a review of available results and techniques. 
Many exceptions are given in various cases, see the classical results~\cite{EKS, Ger, Huse, KZ, Med1, Med2, OP} and more recent developments~\cite{Bar, MSS, Pa, PaP, PP1, PP2, SongXu, Zheng}. In~\cite{Zheng} all exceptional candidate branched covers with $n=3$ and $d\leq 10$ in the case of $M=N=\bS^{2}$ have been determined by computer. A variety of techniques have been used to find some new examples of exceptional data for arbitrarily large degrees~\cite{PaP, PP1, PP2, SongXu}. 
However the general pattern of what kind of data is realizable still remains unclear. The following conjecture suggesting connections with number-theoretic facts was proposed in~\cite{EKS} and is supported by strong evidence in~\cite{PaP}:
\begin{conjecture}[Prime degree conjecture]
If $(d, \Pi)$ is a set of branched cover data for $\bS^{2}\rightarrow \bS^{2}$ that satisfies~\eqref{e:piSum} and~\eqref{e:rh2}  and the degree $d$ is a prime number, then this set of data is realizable.
\end{conjecture}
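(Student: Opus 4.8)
The plan is to pass from the complex-analytic description to the combinatorial group theory of the monodromy representation, and then to exploit the rigidity of transitive permutation groups of prime degree. A degree-$d$ branched cover $f:\bS^2\to\bS^2$ branched over $p_1,\dots,p_n$ is equivalent, up to isomorphism, to a tuple $(\sigma_1,\dots,\sigma_n)\in S_d^{\,n}$ of monodromy permutations satisfying three conditions: the cycle type of $\sigma_i$ equals the partition $\Pi_i=(\Pi_i^1,\dots,\Pi_i^{\ell_i})$; the product relation $\sigma_1\sigma_2\cdots\sigma_n=\id$; and transitivity of $\langle\sigma_1,\dots,\sigma_n\rangle$ on $\{1,\dots,d\}$, which encodes connectedness of $M=\bS^2$. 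The numerical hypotheses \eqref{e:piSum} and \eqref{e:rh2} are precisely the statements that each $\Pi_i$ is a partition of $d$ and that $\sum_i(d-\ell_i)=2(d-1)$. Thus the conjecture asserts: for $d$ prime, every family of partitions of $d$ obeying this ramification budget is the cycle-type data of some transitive product-one tuple. The cases $n\le 2$ are immediate, since then the budget forces each $\Pi_i=(d)$ and $z\mapsto z^d$ realizes the data, so one may assume $n\ge 3$.

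First I would establish existence while temporarily dropping transitivity, via the Frobenius class-multiplication formula: writing $C_i\subset S_d$ for the conjugacy class of cycle type $\Pi_i$, the number of product-one tuples is
\[
\frac{|C_1|\cdots|C_n|}{d!}\sum_{\chi\in\mathrm{Irr}(S_d)}\frac{\chi(C_1)\cdots\chi(C_n)}{\chi(1)^{\,n-2}}.
\]
The trivial and sign characters contribute strictly positive main terms, and the task is to show that for prime $d$ these dominate the remaining character contributions for every admissible choice of partitions; here I would use bounds on the character ratios $|\chi(C_i)|/\chi(1)$ together with the constraint that the total ramification is fixed at $2(d-1)$, which limits how spread out the cycle types can be.

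Next I would upgrade existence to transitive existence by inclusion–exclusion over the invariant subsets of $\{1,\dots,d\}$: a non-transitive product-one tuple decomposes along the orbits of $\langle\sigma_1,\dots,\sigma_n\rangle$, so the count of transitive tuples is the full Frobenius count minus the counts of tuples supported on proper invariant subsets. This is where primality enters decisively. An invariant subset of size $m$ with $0<m<d$ would realize, on that subset, a product-one tuple whose cycle types are sub-partitions of the $\Pi_i$ summing to $m$, and the global budget $2(d-1)$ would have to split as the sum of the sub-budgets $\sum_i(m-\ell_i')$ and $\sum_i((d-m)-\ell_i'')$. For prime $d$ this forced splitting is heavily over-determined, morally the same number-theoretic obstruction that, run in reverse, produces the exceptional composite-degree data of Corollary \ref{c:nprime}; the strategy is to show that the subtracted intransitive counts never exhaust the positive main term.

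The hard part, and the reason the statement is a conjecture rather than a theorem, is exactly this last step: there is at present no uniform lower bound on the \emph{transitive} (connected) Frobenius count that holds across all partition data of a fixed prime degree. The character-sum estimates in the second step degrade when the $\Pi_i$ contain many parts equal to $1$ and $n$ is large, and the inclusion–exclusion in the third step must be controlled against these weak bounds simultaneously. A successful proof would likely need to combine the representation-theoretic positivity with the structural dichotomy for transitive groups of prime degree, namely that each such group is either contained in the affine group $\mathrm{AGL}(1,d)$ or is $2$-transitive, using the latter to rule out the degenerate intransitive splittings directly rather than through crude counting.
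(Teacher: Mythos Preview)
The paper does not prove this statement: it is stated explicitly as an open conjecture (the ``prime degree conjecture'' of Edmonds--Kulkarni--Stong), with the paper's own results cited only as providing further evidence for it via Corollary~\ref{c:nprime}. So there is no proof in the paper to compare your proposal against.

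You yourself correctly diagnose the gap: the Frobenius class-multiplication count gives product-one tuples, but the passage to \emph{transitive} tuples via inclusion--exclusion is not controlled uniformly over all admissible partition data of a fixed prime degree. Your heuristic that primality makes the orbit-splitting ``over-determined'' is appealing but not a proof; nothing in your outline rules out that for some delicate choice of $\Pi$ the intransitive contributions exactly cancel the main term. The appeal to the structure of transitive groups of prime degree (affine versus $2$-transitive) is a plausible ingredient, but you have not shown how to deploy it to close the gap, and indeed this is precisely where all known approaches stall. In short, your proposal is an honest outline of a strategy together with a correct admission that the strategy is incomplete; it does not constitute a proof, and the conjecture remains open.
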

To characterize the necessary and sufficient conditions for all the possible branched cover data so far still is an open problem. In this paper we discuss the connection of this old problem to a new field, which uncovers new examples of exceptional data and in addition provides more evidence of the prime degree conjecture.

\bigskip

\noindent\textbf{Acknowledgement}: The author would like to thank Xiaowei Wang for suggesting this topic, and Rafe Mazzeo and Alex Wright for very useful discussions. The author is grateful to Carlo Petronio for pointing her to~\cite[Theorems~1.4 and~1.5]{PP1},
see the remark preceding Proposition~\ref{p:2k}. The author would also like to thank the referee for the careful reading and valuable comments.

\section{Admissible cone angle data for spherical conical metrics}\label{s:sph}
Spherical metrics with conical singularities are defined by the following conical data: given $n$ distinct points $\frakp=(p_{1}, \dots, p_{n}), \ p_{i}\in M$ and cone angle data $\vec \beta=(\beta_{1}, \dots, \beta_{n}),\ \beta_{i}>0$, $g$ is a smooth metric on $M\setminus \frakp$ with constant curvature one, and near each puncture $p_{i}$ the metric is asymptotically conical with angle $2\pi \beta_{i}$, that is, there exist local coordinates near $p_{i}$ such that the metric is given by
\begin{equation}\label{e:conformal}
e^{\phi(z)} |z|^{2(\beta_{i}-1)}|dz|^{2}
\end{equation} 
with $\phi(z)$ being a smooth function. Locally near $p_{i}$ there is also a geodesic coordinate description given by 
\[
g(r, \theta)=dr^{2}+\beta_{i}^{2}\sin^{2}r \,d\theta^{2}.
\]
Finding constant curvature metrics with a given conformal structure and conical data can be seen as  a singular uniformization problem, which has a long history and has been very active recently. The problem of existence and uniqueness of such metrics given the conical data $(M, \frakp,\vec \beta)$ is still open when some of the cone angles are bigger than $2\pi$. It is obvious that the angles need to satisfy the Gauss--Bonnet constraint
\begin{equation}\label{e:gb}
\chi(M) + \sum_{j=1}^n (\beta_j-1) >0
\end{equation}
because the quantity on the left-hand side equals $\frac{1}{2\pi} A$ where $A$ is the area of the metric $g$. However, it turns out that there are other constraints on~$\vec \beta$ in order for such metrics to exist. 
The recent breakthrough by Mondello--Panov~\cite{MP}, followed by works of Dey~\cite{Dey}, Kapovich~\cite{Ka}, and Eremenko~\cite{Ere1}, establishes necessary and sufficient conditions on 
$\vec{\beta}$ for which there exists such a spherical metric  on $\bS^2$ with these prescribed cone angle 
parameters. We summarize those results below. 

\begin{definition}[Admissible cone angles] 
Suppose $n\geq 2$. Let $d_{\ell^{1}}(\vec x, \vec y):=\sum_{i=1}^{n}|x_{i}-y_{i}|$ be the $\ell^{1}$ distance on $\RR^{n}$, and $\ZZ^{n}_{odd}$ be the set of integer points $(z_{1}, \dots, z_{n})\in \ZZ^{n}$ such that $\sum_{i=1}^{n} z_{i}$ is odd.
Let $\vec \beta=(\beta_{1}, \dots, \beta_{n})\in ((0,\infty)\setminus\{1\})^{n}$ be an $n$-tuple satisfying~\eqref{e:gb}. We say $\vec \beta$ is an {\em admissible cone angle set} if one of the following conditions holds:
\begin{enumerate}
\item [(a)] $d_{\ell^{1}}(\vec \beta-\vec 1, \ZZ_{odd}^{n})>1$;
\item [(b)] $n=2$ and $\beta_{1}=\beta_{2}\in (0,\infty)\setminus \NN$.
\item [(c)] $d_{\ell^{1}}(\vec \beta-\vec 1, \ZZ_{odd}^{n})=1$, and $\beta_{i}\in \NN$ for all $i$. Moreover, $2\max_{i}(\beta_{i}-1)\leq \sum_{i=1}^{n} (\beta_{i}-1)$.
\item [(d)] $d_{\ell^{1}}(\vec \beta-\vec 1, \ZZ^{n}_{odd})=1$, and (up to reordering) there exists $1< m <n$ such that $\beta_{1}, \dots \beta_{m}\notin \NN$, $\beta_{m+1}, \dots, \beta_{n}\in \NN$. Moreover $\vec \beta$ satisfies the following ``coaxial conditions'':
\begin{itemize}
\item There exists $\{\epsilon_{i}\}_{i=1}^{m}$ with $\epsilon_{i}\in\{\pm 1\}$ such that 
$$k'=\sum_{i=1}^{m}\epsilon_{i}\beta_{i}\geq 0.$$
\item  $k''=\sum_{i=m+1}^{n} \beta_{i}-n-k'+2\geq 0$ and $k''$ is even. 
\item Let $(\beta_{1}, \dots, \beta_{m}, \underbrace{1,\dots, 1}_{k'+k''})=\eta(b_{1}, \dots, b_{m+k'+k''})$ where $b_{i}$'s are integers whose greatest common divisor is 1, then 
$$2\max_{i=m+1}^{n}\beta_{i}\leq \sum_{i=1}^{m+k'+k''} b_{i}.$$
\end{itemize}
\end{enumerate}\label{d:angle}
\end{definition}
Note that by this definition, if $d_{\ell^{1}}(\vec \beta-\vec 1, \ZZ^{n}_{odd})<1$, or if $d_{\ell^{1}}(\vec \beta-\vec 1, \ZZ^{n}_{odd})=1$ but $\beta_{i}\notin \NN$ for all $i$ when $n\geq 3$, then this cone angle set $\vec\beta$ is automatically not admissible. 

\begin{figure}[h]
\centering
\includegraphics[width=0.6\textwidth]{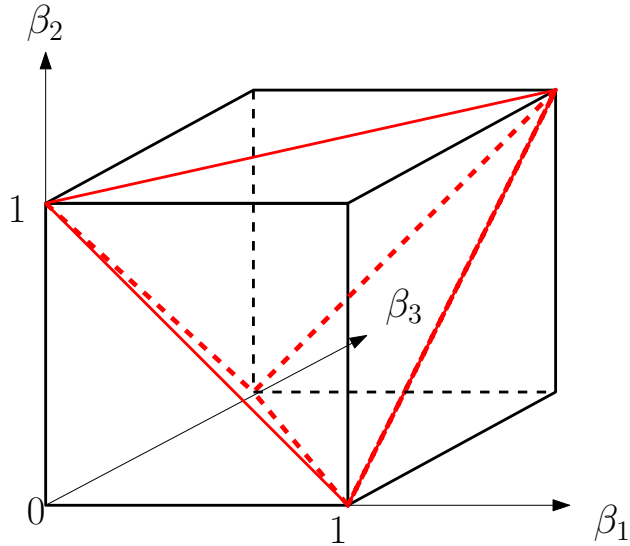}
 \caption{An illustration of the set of admissible cone angles when $n=3$ and all $\beta_{i}\leq 1$: in this case the admissible region is the interior of the tetrahedron bounded by red edges, plus one point $(1,1,1)$ (which is the trivial smooth case). None of the boundary points of this tetrahedron is admissible by Definition~\ref{d:angle}.}
 \label{f:MP}
\end{figure}

We also note here that, when $\beta_{i}=1$, the cone angle is $2\pi$, which gives a smooth point of the metric. And in the above criterion we assume none of the $\beta_{i}$'s is equal to 1. However, in Theorem~\ref{t:main}, $\vec \beta$ and $\vec \beta \cdot \Pi$ are allowed to have some of the entries to be equal to 1. Therefore we need to first remove all the entries that are equal to 1, and then apply the admissible angle criterion to the ``shortened'' cone angle vector. For example, $\vec \beta=(1/2, 1/2, 1)$ is admissible according to situation (b) in the definition above.

The definition above gives a complete description of possible cone angle combinations for spherical conical metrics on $\bS^{2}$, which is summarized below.
\begin{proposition}
If $M=\bS^{2}$, then there exists a spherical conical metric with cone angle data $\vec\beta$ if and only if $\vec \beta$ is an admissible cone angle set according to Definition~\ref{d:angle}.
\end{proposition}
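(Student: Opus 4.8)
The plan is to obtain this Proposition as a synthesis of the theorems of Mondello--Panov~\cite{MP}, Dey~\cite{Dey}, Kapovich~\cite{Ka}, and Eremenko~\cite{Ere1}, organized according to the value of $d_{\ell^{1}}(\vec\beta-\vec 1,\ZZ^{n}_{odd})$ and the integrality pattern of the $\beta_i$. Following the convention after Definition~\ref{d:angle}, one first deletes every entry equal to $1$ (these are smooth points and are irrelevant to existence), reducing to the case $\beta_i\neq 1$ for all $i$; if fewer than two entries remain, nonexistence is the classical fact that $\bS^2$ admits no spherical metric with a single conical point, consistent with such $\vec\beta$ being declared non-admissible. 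I would then prove the two implications separately.

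\emph{Necessity.} Suppose $\bS^2$ carries a spherical conical metric with data $\vec\beta$. Integrating the curvature gives the Gauss--Bonnet inequality~\eqref{e:gb}, and the angle-constraint theorem of~\cite{MP} gives $d_{\ell^{1}}(\vec\beta-\vec 1,\ZZ^{n}_{odd})\ge 1$, so the only remaining case is equality. Here I would invoke the structural result (from~\cite{MP}, with refinements in~\cite{Dey, Ka, Ere1}) that any metric realizing a boundary tuple is \emph{coaxial}: its holonomy lies in a one-parameter rotation subgroup of $SO(3)$, up to an order-two extension. Analyzing such metrics through the associated chain of spherical lunes along meridians gives the trichotomy: if $n=2$, the two angles must coincide and be non-integral (case (b)); if all $\beta_i$ are integers, the metric (which then has trivial holonomy) is the pullback of the round sphere by a rational map, whose prescribed ramification forces $\sum_i(\beta_i-1)$ to be even --- equivalently $d_{\ell^{1}}=1$ together with integrality --- and whose degree $1+\tfrac12\sum_i(\beta_i-1)$ must be at least the largest ramification index, i.e.\ $2\max_i(\beta_i-1)\le\sum_i(\beta_i-1)$ (case (c)); and in the mixed case the non-integral angles must assemble into a closed coaxial configuration, which is precisely what the ``coaxial conditions'' in (d) encode --- the signs $\epsilon_i$ record the winding of the meridians, the integers $k'$ and $k''$ the number of auxiliary meridians, the parity of $k''$ the orientability of the closing-up, and the $\gcd$-normalization isolates the primitive underlying rotation, leaving a triangle inequality for the integral angles. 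In particular, for $n\ge 3$ with $d_{\ell^{1}}=1$ and no integral $\beta_i$ there is no closed configuration, matching the note after Definition~\ref{d:angle}.

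\emph{Sufficiency.} One exhibits a metric in each admissible case. If $d_{\ell^{1}}(\vec\beta-\vec 1,\ZZ^{n}_{odd})>1$ (case (a)), this is exactly the regime of the existence theorem of~\cite{MP}, which in fact produces a metric for every conformal structure on the $n$-punctured sphere. Case (b) is the explicit rotationally symmetric ``football'' $dr^2+\beta^2\sin^2 r\,d\theta^2$ with two antipodal cone points of angle $2\pi\beta$, $\beta\notin\NN$. Case (c): pull back the round metric by a rational map of degree $1+\tfrac12\sum_i(\beta_i-1)$ with critical points of multiplicities $\beta_i$ placed at arbitrary distinct points --- such a map exists precisely under $2\max_i(\beta_i-1)\le\sum_i(\beta_i-1)$ (when the inequality is an equality one may take a polynomial totally ramified at infinity) --- or, equivalently, glue spherical lunes along meridians. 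Case (d) is realized by the coaxial-metric constructions of Eremenko~\cite{Ere1} (see also~\cite{Dey, Ka}), whose output is exactly the tuples satisfying the listed conditions. These cases exhaust the admissible $\vec\beta$, completing the equivalence.

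\emph{Main obstacle.} The delicate part is the boundary locus $d_{\ell^{1}}(\vec\beta-\vec 1,\ZZ^{n}_{odd})=1$: one must match the somewhat different parametrizations of coaxial/reducible metrics used in~\cite{MP, Dey, Ka, Ere1} against the explicit bookkeeping of Definition~\ref{d:angle}(c)--(d), verify that the sign vector $(\epsilon_i)$, the auxiliary integers $k',k''$, the parity of $k''$, and the $\gcd$-normalization together reproduce precisely the existence criteria of those papers, and confirm that the ``delete the $1$'s'' reduction is compatible throughout. By comparison the strict-inequality regime and the football case are immediate citations.
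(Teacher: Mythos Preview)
Your proposal is correct and follows essentially the same route as the paper: both obtain the Proposition as a direct synthesis of the cited theorems of Mondello--Panov, Dey, Kapovich, and Eremenko, split according to whether $d_{\ell^{1}}(\vec\beta-\vec 1,\ZZ^{n}_{odd})$ is $>1$ or $=1$ and, in the latter case, according to how many of the $\beta_i$ are integers. The paper's proof is simply a terser version of yours --- it records which reference handles which case without spelling out the internal mechanisms (coaxial holonomy, rational-map pullbacks, lune gluing) that you sketch; your necessity/sufficiency split is a harmless reorganization of the same content.
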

\begin{proof}
By the work of Mondello--Panov~\cite{MP}, if such metric exists then the ``holonomy condition''
\begin{equation}\label{e:holo}
d_{\ell^{1}}(\vec \beta-\vec 1, \ZZ_{odd}^{n})\geq 1
\end{equation}
must hold. And if the strict inequality holds, which is situation (a), then there exists at least one such metric. 

By the work of Dey~\cite{Dey}, if the equality in~\eqref{e:holo} holds, and $\beta_{i}\notin\NN$ for any $i$, then the only admissible set is when $n=2$ and $\beta_{1}=\beta_{2}$ which is situation (b). 

So we are left with the case when the equality in~\eqref{e:holo} holds and at least some of the $\beta_{i}$'s are integers. When they are all integers, which is situation (c), Kapovich's result~\cite{Ka} gives the sufficient and necessary condition. And the remaining case, which is situation (d), is given by Eremenko~\cite{Ere1}.
\end{proof}

One direct consequence from the above theorem is that, when $n=2$, the admissible condition is very restrictive. Combining situation (b) and (c) we have
\begin{corollary}\label{c:football}
When $n=2$, the necessary and sufficient condition for the existence of a spherical conical metric is
$\beta_{1}=\beta_{2}$. 
\end{corollary}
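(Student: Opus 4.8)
The plan is to read the corollary off from the Proposition stated just above it --- a spherical conical metric on $\bS^{2}$ with cone angles $\vec\beta$ exists if and only if $\vec\beta$ is an admissible cone angle set in the sense of Definition~\ref{d:angle} --- so that everything reduces to checking that, for $n=2$, admissibility of $\vec\beta=(\beta_{1},\beta_{2})$ is equivalent to $\beta_{1}=\beta_{2}$. (I take $\beta_{1},\beta_{2}\ne 1$ as in the Definition; if some $\beta_{i}=1$ the corresponding point is not a genuine cone point and the statement is checked directly.) Observe first that for $n=2$ on $\bS^{2}$ the Gauss--Bonnet inequality~\eqref{e:gb} reads $\beta_{1}+\beta_{2}>0$, hence is automatic and imposes nothing.

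The first task is to discard clauses (a) and (d) of Definition~\ref{d:angle}. Clause (d) is vacuous when $n=2$, since it requires an integer $m$ with $1<m<n$. Clause (a) requires the strict inequality $d_{\ell^{1}}(\vec\beta-\vec 1,\ZZ^{2}_{odd})>1$, and I claim this can never hold, because the $\ell^{1}$-distance from \emph{any} point of $\RR^{2}$ to $\ZZ^{2}_{odd}$ is at most $1$: round each coordinate to a nearest integer; if the two chosen integers have odd sum the point already lies within distance $\tfrac12+\tfrac12=1$ of $\ZZ^{2}_{odd}$, while if their sum is even, replace whichever coordinate lies farther from its nearest integer by the other adjacent integer --- this flips the parity of the sum and keeps the total $\ell^{1}$-distance $\le 1$. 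This elementary estimate, which rules out clause (a) in the two-point case, is the only step that is not pure bookkeeping in Definition~\ref{d:angle}, and is where I expect the (minor) work to lie.

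With (a) and (d) eliminated, $\vec\beta$ is admissible precisely when clause (b) or clause (c) holds. Clause (b) says exactly $\beta_{1}=\beta_{2}\in(0,\infty)\setminus\NN$. For clause (c), beyond $\beta_{1},\beta_{2}\in\NN$ and $d_{\ell^{1}}(\vec\beta-\vec 1,\ZZ^{2}_{odd})=1$, the extra inequality $2\max_{i}(\beta_{i}-1)\le\sum_{i=1}^{2}(\beta_{i}-1)$ becomes, taking $\beta_{1}\ge\beta_{2}$, $2(\beta_{1}-1)\le(\beta_{1}-1)+(\beta_{2}-1)$, i.e.\ $\beta_{1}\le\beta_{2}$, forcing $\beta_{1}=\beta_{2}\in\NN$. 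Hence ``(b) or (c)'' is equivalent to $\beta_{1}=\beta_{2}$. For the reverse implication I would check that when $\beta_{1}=\beta_{2}=t\in\NN$ the remaining hypotheses of (c) are satisfied: $(t-1,t-1)$ has even coordinate sum, so $d_{\ell^{1}}((t-1,t-1),\ZZ^{2}_{odd})=1$ by the estimate above, and $2\max_{i}(\beta_{i}-1)=2(t-1)=\sum_{i=1}^{2}(\beta_{i}-1)$. This yields the claimed equivalence; concretely, a metric realizing $\beta_{1}=\beta_{2}=t$ is the round football $dr^{2}+t^{2}\sin^{2}r\,d\theta^{2}$ on $\bS^{2}$, with cone angle $2\pi t$ at each pole.
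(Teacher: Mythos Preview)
Your argument is correct and follows exactly the route the paper indicates: the paper simply remarks that the corollary follows ``combining situation (b) and (c)'' of Definition~\ref{d:angle}, and you have supplied the verification --- in particular the observation that clause~(a) is vacuous for $n=2$ because $d_{\ell^{1}}(\cdot,\ZZ^{2}_{odd})\le 1$ everywhere, and that clause~(d) cannot apply since $1<m<2$ is impossible. Your computation that the inequality in clause~(c) forces $\beta_{1}=\beta_{2}$, and your check of the converse, are both correct; the phrasing of the rounding argument for the $\ell^{1}$-distance bound could be tightened slightly (you want to adjust the integer attached to the coordinate with the \emph{larger} deficit, yielding total distance $(1-|x-m|)+|y-n|\le 1$), but the content is right.
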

Depending on whether $\beta_{i}$ is an integer or not~\cite{Tr2}, the spherical metrics have different behavior which can be seen from Figure~\ref{f:football}. 

\begin{figure}[h]
\includegraphics[width=\textwidth]{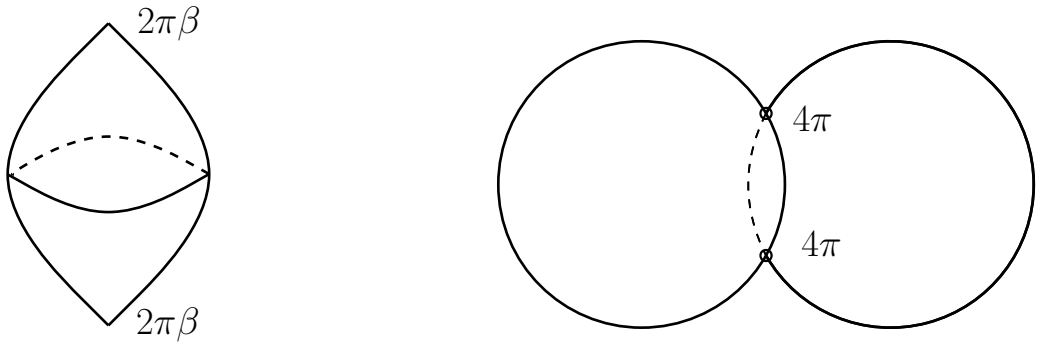}
 \caption{The spherical conic metrics with two cone points are given by two types: (a) When $\beta_{1}=\beta_{2}\notin \NN$, they are footballs where two cone points are restricted to be antipodal (on the left); (b) When $\beta_{1}=\beta_{2}=\beta\in \NN$, they are obtained by degree-$\beta$ covers of $\bS^{2}$ where the two cone points are free to move around (on the right).}
 \label{f:football}
\end{figure}

When all the cone angles are less than $2\pi$, the above result can be summarized by a previous result due to Troyanov~\cite{Tr, Tr2}, which is easier to check for the later construction of $\vec\beta$ in Theorem~\ref{t:main}.
\begin{proposition}\label{p:Tro}
If $\beta_{i}\in (0,1)$ for all $i=1, \dots, n$, then $\vec \beta$ gives rise to a spherical conical metric on $\bS^{2}$ if and only if one of the following is true:
\begin{itemize}
\item $n=2$, and $\beta_{1}=\beta_{2}\in (0,1)$;
\item $n\geq 3$, $\sum_{i=1}^{n}(\beta_{i}-1)>-2$, and 
\begin{equation}
\min\{2,2\beta_j\} + n- 2> \sum_{i=1}^{n} \beta_i, \qquad j = 1, \ldots, n.
\end{equation}
\end{itemize}
\end{proposition}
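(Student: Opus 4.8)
The plan is to read the statement off from Definition~\ref{d:angle} and the classification preceding it. Since $\beta_i\in(0,1)$ for every $i$, no $\beta_i$ is an integer, so situations (c) and (d) of Definition~\ref{d:angle} cannot occur. If $n=2$, the only surviving possibility is situation (b), which under the hypothesis $\beta_i\in(0,1)$ reads exactly $\beta_1=\beta_2\in(0,1)$; this is the first bullet, and no computation is needed. For $n\geq 3$ the existence of the metric is equivalent to the Gauss--Bonnet inequality~\eqref{e:gb} together with situation (a), i.e. $d_{\ell^1}(\vec\beta-\vec1,\ZZ^n_{odd})>1$, so the task reduces to identifying when this strict inequality holds.

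To evaluate this $\ell^1$-distance, set $x_i=\beta_i-1\in(-1,0)$. If a lattice point $\vec z\in\ZZ^n$ has some coordinate $z_i\notin\{0,-1\}$, then already $|x_i-z_i|>1$; hence whenever $d_{\ell^1}(\vec\beta-\vec1,\ZZ^n_{odd})\le 1$ a nearest point can be chosen in $\{0,-1\}^n$. Such points correspond to subsets $S\subseteq\{1,\dots,n\}$ of odd cardinality (the indices $i$ with $z_i=-1$), the distance being $D(S):=\sum_{i\in S}\beta_i+\sum_{i\notin S}(1-\beta_i)$. Therefore $d_{\ell^1}(\vec\beta-\vec1,\ZZ^n_{odd})>1$ if and only if $D(S)>1$ for every odd $S$. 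Writing $D(S)=|S|-\sum_{i\in S}(1-\beta_i)+\sum_{i\notin S}(1-\beta_i)$ and using that~\eqref{e:gb} is the inequality $\sum_{i}(1-\beta_i)<2$, one gets for $|S|\ge 3$ that $D(S)\ge |S|-\sum_i(1-\beta_i)> |S|-2\ge 1$ automatically; so only the singletons $S=\{j\}$ impose a genuine restriction. Finally $D(\{j\})=2\beta_j+n-1-\sum_i\beta_i$, and $D(\{j\})>1$ rearranges --- using $\beta_j<1$, so $\min\{2,2\beta_j\}=2\beta_j$ --- to exactly $\min\{2,2\beta_j\}+n-2>\sum_{i=1}^n\beta_i$. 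Combined with~\eqref{e:gb}, this is the second bullet.

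The only step requiring care is the reduction of the lattice-distance computation to subsets of $\{0,-1\}^n$, together with the observation that once Gauss--Bonnet is imposed every non-singleton constraint is automatically satisfied; the remaining manipulations are routine rearrangement. It is worth noting that, because all $\beta_i$ lie strictly in $(0,1)$, there are no entries equal to $1$ to remove, so the normalization issue flagged after Definition~\ref{d:angle} does not intervene here.
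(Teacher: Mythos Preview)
Your argument is correct, but there is nothing in the paper to compare it against: the paper states this proposition as a classical result of Troyanov \cite{Tr,Tr2} and does not supply a proof. What you have done instead is derive Troyanov's conditions directly from Definition~\ref{d:angle}, which is a genuinely different route and serves as a useful internal consistency check between the Mondello--Panov framework and the older Troyanov criterion.

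One small point: when $n=2$ you assert that ``the only surviving possibility is situation (b)'', but situation (a) is in principle also available for $n=2$. It is true that (a) never holds here --- your own singleton computation gives $D(\{1\})=1+\beta_1-\beta_2$ and $D(\{2\})=1+\beta_2-\beta_1$, and these cannot both exceed $1$ --- but this deserves a sentence. Otherwise the reduction to lattice points in $\{0,-1\}^n$, the observation that Gauss--Bonnet disposes of the $|S|\ge 3$ constraints, and the identification of the singleton constraints with Troyanov's inequality are all clean and correct.
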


Another type of result we will need is the existence of conical metrics depending on the position of cone points. When the number of cone points (which is denoted by $n$ in our notation) is less than 4, there is no restriction. This is because we can find a Mobius transform $u: \bS^{2}\rightarrow \bS^{2}$ such that any given three distinct points on $\bS^{2}$ are mapped to three other given points. However, when $n\geq 4$, different sets of branching points correspond to different conformal classes and we no longer have such Mobius transforms. To resolve this problem, we have the following result on the freedom of cone point positions, due to Troyanov~\cite{Tr} and Mazzeo--Weiss~\cite{MW}:
\begin{proposition}\label{p:position}
If $\beta_{i}\in (0,1)$ for all $i=1, \dots, n$ and $\vec \beta$ is an admissible angle set, then for any $n$ distinct points $\{p_{1}, \dots, p_{n}\}$ on $\bS^{2}$, there exists a spherical conical metric with cone angle $2\pi\beta_{i}$ at point $p_{i}$.
\end{proposition}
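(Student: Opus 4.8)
For $n=2$ and $n=3$ any two, respectively three, distinct points of $\bS^2$ are carried to any other such tuple by a M\"obius transformation, so these cases follow at once from the existence statements already quoted (for $n=2$ this is Corollary~\ref{c:football}); the content of the proposition is the case $n\geq4$. In all cases a fixed configuration $\{p_1,\dots,p_n\}$ equips $\bS^2\setminus\{p_1,\dots,p_n\}$ with a definite Riemann surface structure, and our task becomes that of \emph{prescribing constant curvature $+1$ in this conformal class with the given conical data}. The crucial observation is that the admissibility hypothesis---equivalently, by Proposition~\ref{p:Tro}, the inequalities $0<\chi(\bS^2)+\sum_i(\beta_i-1)<2\min_i\beta_i$---depends only on $\vec\beta$ and not on where the $p_i$ sit; hence it suffices to run Troyanov's variational argument \cite{Tr} in an arbitrary fixed conformal class, which we now recall.

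Pick a conformal background metric $g_0$ on $\bS^2$ that is smooth and positive away from the $p_i$ and equals $e^{\psi}|z-p_i|^{2(\beta_i-1)}|dz|^2$ near $p_i$, with $\psi$ smooth and $z$ a holomorphic coordinate. Writing $g=e^{2u}g_0$, the condition $K_g\equiv1$ becomes the singular Liouville equation $-\Delta_{g_0}u+K_{g_0}=e^{2u}$ on $\bS^2\setminus\{p_i\}$ with $u$ bounded, whose bounded solutions are exactly the critical points of
\[
J(u)=\tfrac12\int_{\bS^2}|\nabla_{g_0}u|^2\,dA_{g_0}+\int_{\bS^2}K_{g_0}\,u\,dA_{g_0}-\tfrac{A_\beta}{2}\log\!\int_{\bS^2}e^{2u}\,dA_{g_0},
\]
where $A_\beta:=\int_{\bS^2}K_{g_0}\,dA_{g_0}=2\pi\big(\chi(\bS^2)+\sum_i(\beta_i-1)\big)>0$ by \eqref{e:gb}. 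This $J$ is well defined on the weighted Sobolev space $H^1(\bS^2,g_0)$ and invariant under $u\mapsto u+\mathrm{const}$, so one works on the slice $\int u\,dA_{g_0}=0$.

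The main step is coercivity of $J$ on this slice, and it rests on a sharp weighted Moser--Trudinger inequality on $(\bS^2,g_0)$ whose optimal constant is governed by the smallest cone angle $2\pi\min_i\beta_i$ (a thin cone point makes blow-up easier, hence lowers the critical threshold). Unwinding the model cone $|z|^{2(\beta_i-1)}|dz|^2$ near each $p_i$ reduces the local estimate to the flat Moser--Trudinger inequality on a Euclidean sector, and patching these gives, for $\int u\,dA_{g_0}=0$,
\[
\log\!\int_{\bS^2}e^{2u}\,dA_{g_0}\ \leq\ \frac{1}{4\pi\min_i\beta_i}\int_{\bS^2}|\nabla_{g_0}u|^2\,dA_{g_0}+C .
\]
Substituting into $J$ and bounding the linear term $\int K_{g_0}u\,dA_{g_0}$ by the Poincar\'e inequality on the slice yields
\[
J(u)\ \geq\ \tfrac12\Big(1-\tfrac{A_\beta}{4\pi\min_i\beta_i}\Big)\int_{\bS^2}|\nabla_{g_0}u|^2\,dA_{g_0}-C\big(1+\|\nabla_{g_0}u\|_{L^2}\big),
\]
and the admissibility inequality $A_\beta<4\pi\min_i\beta_i$ makes the leading coefficient strictly positive, so $J$ is coercive. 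Establishing this weighted Moser--Trudinger inequality with the correct sharp constant---in particular checking that it is the \emph{strict} admissibility inequality that is needed and that the constant does not degrade as the $p_i$ move---is the principal analytic obstacle, and it is precisely here that the hypothesis $\beta_i<1$ is used.

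Granted coercivity, the direct method closes the argument uniformly in the configuration: a minimizing sequence for $J$ on the slice is bounded in $H^1(\bS^2,g_0)$, hence weakly convergent along a subsequence; weak lower semicontinuity of the Dirichlet energy together with the compactness of $u\mapsto e^{2u}$ from $H^1$ into $L^1$ (again a consequence of the Moser--Trudinger inequality) shows the weak limit minimizes $J$ and so solves the Euler--Lagrange equation. Elliptic regularity makes $u$ smooth off the $p_i$, and comparison with the model cone at each $p_i$ shows $u$ is bounded there, so $e^{2u}g_0$ is a genuine spherical conical metric with cone angle $2\pi\beta_i$ at $p_i$; since $\{p_1,\dots,p_n\}$ was arbitrary, the proposition follows. (Alternatively, Mazzeo--Weiss \cite{MW} obtain the statement from deformation theory: for $\beta_i<1$ the linearization of the curvature equation at a solution has trivial kernel and cokernel, so the forgetful map from the moduli space of such metrics to the configuration space of $n$ points on $\bS^2$ is a local diffeomorphism; a properness estimate together with connectedness of that configuration space then propagates Troyanov's solution from one conformal class to all of them.)
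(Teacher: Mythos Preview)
Your sketch is correct and follows exactly the route the paper takes: the paper's own proof is simply a citation to Theorem~C of~\cite{Tr} and Theorem~2 of~\cite{MW}, and you have faithfully unpacked Troyanov's variational argument (with the weighted Moser--Trudinger inequality and the coercivity threshold $A_\beta<4\pi\min_i\beta_i$) together with a brief account of the Mazzeo--Weiss deformation-theoretic alternative. There is nothing to add---you have supplied more detail than the paper does.
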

\begin{proof}
See Theorem C of~\cite{Tr} and Theorem 2 of~\cite{MW}.
\end{proof}

However such a result does not hold when at least one of the cone angles is bigger than $2\pi$. The ongoing work by Mazzeo and the author~\cite{MZ, MZ2} suggests that for a given set of admissible $\vec\beta$, in some cases there are restrictions on the cone point positions.
Luckily, to remedy this problem so that in Theorem~\ref{t:main} we can use admissible cone angles $\beta_{i}$ for any positive $\beta_{i}$, we have the following realizability of branched covers regarding the position of branching points:
\begin{proposition}\label{p:Riemann}
Given a set of branching data $(d,\Pi)$, if there exists a corresponding branched cover $f:\bS^{2}\rightarrow \bS^{2}$ with $n$ branching points, then for any distinct $n$ points $\{p_{1}, \dots, p_{n}\}$ on $\bS^{2}$, there exists a branched cover $\tilde f$ with branching points $\{p_{i}\}_{i=1}^{n}$ and the same branching data $(d, \Pi)$.
\end{proposition}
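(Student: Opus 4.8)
The plan is to deduce this from the classical fact that a branched cover of $\bS^2$ over $\bS^2$ is determined, up to isomorphism, by a purely combinatorial object that does not see the actual positions of the branch points. Concretely, fixing a base point and $n$ loops generating $\pi_1(\bS^2\setminus\{p_1,\dots,p_n\})$, a degree-$d$ branched cover with branch locus $\{p_i\}$ and branching data $\Pi$ corresponds to a transitive $n$-tuple of permutations $(\sigma_1,\dots,\sigma_n)\in S_d^n$ with $\prod_i\sigma_i=\id$, where the cycle type of $\sigma_i$ is the partition $(\Pi_i^1,\dots,\Pi_i^{\ell_i})$. This is the Riemann existence theorem together with the standard monodromy dictionary. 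The point is that the data $(d,\Pi)$ being \emph{realizable with branch points} $\{p_i\}$ is equivalent to the existence of such a tuple, and that equivalence is insensitive to which $n$ points of $\bS^2$ we pick.

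First I would make this precise: given the cover $f:\bS^2\to\bS^2$ with branch points $\{p_i\}$, choose a regular value $*$ and arcs producing the standard generators, and extract the monodromy tuple $(\sigma_1,\dots,\sigma_n)$; transitivity follows from connectedness of the total space, the product-one relation from the relation in $\pi_1(\bS^2\setminus\{p_i\})$, and the cycle types from the local normal form $z\mapsto z^{\Pi_i^j}$. Second, given any other $n$ distinct points $\{\tilde p_i\}$, I would use the fact that $\bS^2\setminus\{p_1,\dots,p_n\}$ and $\bS^2\setminus\{\tilde p_1,\dots,\tilde p_n\}$ are homeomorphic by an orientation-preserving homeomorphism carrying the chosen generating loops to a corresponding generating set (any two $n$-point subsets of $\bS^2$ are related by an ambient homeomorphism, and one can further adjust so the marked generators match). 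Pulling back the tuple through this identification produces the same permutations $(\sigma_1,\dots,\sigma_n)$ now realizing the punctured-surface cover over $\bS^2\setminus\{\tilde p_i\}$; the Riemann existence theorem then fills in the punctures to give a branched cover $\tilde f$ with branch points $\{\tilde p_i\}$ and, by construction, the same local ramification indices, hence the same $(d,\Pi)$.

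Alternatively, and perhaps more cleanly, I would phrase the middle step holomorphically: the statement that the positions of the branch points are irrelevant to realizability is exactly the statement that the Hurwitz number $H_d(\Pi)$ counting covers with prescribed branching data depends only on $(d,\Pi)$ and the genus of the base, not on the location of the branch points. This is classical (it is why the Hurwitz existence problem is posed purely in terms of $(d,\Pi)$ in the first place). So one can simply cite that $H_d(\Pi)>0$ is a property of $(d,\Pi)$ alone, and then for the target configuration $\{\tilde p_i\}$ invoke Riemann existence to produce a cover with exactly that branch locus.

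The main obstacle, such as it is, is purely bookkeeping: one must be careful that the homeomorphism moving $\{p_i\}$ to $\{\tilde p_i\}$ can be taken to respect the \emph{labelling} of the branch points, so that the $i$-th conjugacy class is matched with the $i$-th target point and the resulting $\tilde f$ has branching data $(\Pi_i^1,\dots,\Pi_i^{\ell_i})$ precisely at $\tilde p_i$. Since the mapping class group of $(\bS^2,\{p_1,\dots,p_n\})$ acts transitively enough on configurations and one may permute the punctures freely, this presents no real difficulty; everything else is the standard equivalence between branched covers, monodromy representations, and the Riemann existence theorem.
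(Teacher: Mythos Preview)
Your proposal is correct and takes essentially the same approach as the paper: the paper's proof is simply a one-line invocation of the Riemann existence theorem (citing \cite[Chapter~7, Theorems~3(b) and~4]{Nar}), and your argument spells out exactly the monodromy dictionary and puncture-filling that underlie that citation.
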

\begin{proof}
This is essentially the Riemann's existence theorem regarding extending a topological cover to a branched cover of Riemann surfaces. See for example~\cite[Chapter 7 Theorems 3(b) and 4]{Nar} for the proof.
\end{proof}

\section{Exceptional data for branched covers of $\bS^{2}$}\label{s:cover}
We now prove the main theorem.
\begin{proof}[Proof of Theorem~\ref{t:main}]
For a Riemann surface $M$, suppose a branched cover $f: M\rightarrow \bS^{2}$ corresponding to $(d, \Pi)$ exists, with $\frakp:=(p_{1}, \dots, p_{n})\subset \bS^{2}$ being the set of branching points. And suppose $g$ is a spherical conical metric on $\bS^{2}$ such that its cone points are contained in $(p_{1}, \dots, p_{n})$. Let $\vec \beta=(\beta_{1}, \dots, \beta_{n}) \in (0,\infty)^{n}$ be the cone angle data associated to those points, and notice that it is possible that the branching points are actually smooth points of $g$, i.e., we allow some or all of the $\beta_{i}$'s to be equal to 1.

First we show that via pullback, $g$ lifts to a spherical conical metric $f^{*}g$ on $M$, with cone angle data $\vec \beta\cdot \Pi$. To see this, we first look at the topological degree-$d$ cover obtained by removing all the branching points on $\bS^{2}$ and all the ramification points on $M$, denoted by $M\setminus f^{-1}(\frakp) \rightarrow \bS^{2}\setminus \frakp$. Since the metric $g$ is smooth with curvature one at any point on $\bS^{2}\setminus \frakp$, and $f$ is a local diffeomorphism restricted to this cover, we can see that $f^{*}g$ is smooth and with curvature one at any point when restricted to $M\setminus f^{-1}(\frakp)$. On the other hand, for any branching point $p_{i}\in \bS^{2}$ and one of its pre-images $q_{i}^{j}\in M$, there exists a local coordinate $z$ near $q_{i}^{j}$ such that $f$ is written as $f(z)=z^{\Pi_{i}^{j}}$, where $\Pi_{i}^{j}\in \NN$ is the ramification index. Since near $p_{i}$ the metric $g$ is given by
$$
g=e^{\phi(z)}|z|^{2(\beta_{i}-1)}|dz|^{2}, 
$$ 
then by computing the pullback we get that near $q_{i}^{j}$,
$$
f^{*}g=(\Pi_{i}^{j})^{2} e^{\phi(z^{\Pi_{i}^{j}})}|z|^{2(\Pi_{i}^{j}\beta_{i}-1)}|dz|^{2}
$$
which is a cone point with cone angle $2\pi(\Pi_{i}^{j}\beta_{i})$.

Now we prove the theorem by contradiction. Suppose $f: \bS^{2}\rightarrow \bS^{2}$ is a branched cover corresponding to $(d, \Pi)$, such that $(p_{1}, \dots, p_{n})$ are the branching points on $\bS^{2}$. And suppose we have the admissible cone angle set $\beta_{i}\in (0,\infty)^{n}$. By Proposition~\ref{p:position}, if all the cone angles are less than $2\pi$, there exists such a spherical conical metric $g$ with cone angle $2\pi \beta_{i}$ at point $p_{i}, \ i=1, \dots, n$. Otherwise we use Proposition~\ref{p:Riemann} and another branched cover $\tilde f$ with the same branching data, so that the branching points of $\tilde f$ match with the cone points of $g$. Then by the discussion above, the pullback metric $f^{*}g$ is a spherical conical metric on $\bS^{2}$ with cone angle $2\pi(\Pi_{i}^{j}\beta_{i})$ at point $q_{i}^{j}, \ i=1, \dots, n, \ j=1, \dots, \ell_{i}$. However, 
$\vec \beta\cdot \Pi$ is not admissible, so there does not exist such a metric on $\bS^{2}$, which is a contradiction. So we have proved that such branched cover $f$ does not exist.
\end{proof}
\begin{remark}
We note here that, from the discussion before Proposition~\ref{p:position}, we actually do not need to worry about the position of the branching points if there are at most three branching points (which is the case for all the examples below). The reason is that in the proof we can use an appropriate Mobius transform $u$, such that $u\circ f$ is a branched cover with matching branching points to the given conical metric $g$.
\end{remark}

We now apply the theorem to obtain new types of exceptional branching data, which is done by carefully picking the base data $\vec \beta$ for different cases.
\begin{proof}[Proof of Proposition~\ref{p:2k}]
For the first case, we take $\vec \beta=(\beta_{1},\beta_{2},\beta_{3})=(\frac{1}{2},\ \frac{1}{2}, \ \frac{1}{2})$, which is admissible by Proposition~\ref{p:Tro}. However, the angle data obtained by covering is given by 
$$
\vec\beta\cdot \Pi=\bigg(\frac{k_{1}}{2},\ \frac{k_{2}}{2},\ \underbrace{1, \dots, 1}_{2k}\bigg)
$$
which after removing all the smooth points (i.e. removing all the 1's) we get a metric with two distinct cone angles $2\pi(\frac{k_{1}}{2})\neq 2\pi (\frac{k_{2}}{2})$. By Corollary~\ref{c:football}, such a cone angle set is not admissible. 

Similarly for the second case, we use the same $\vec \beta=(\frac{1}{2},\ \frac{1}{2}, \ \frac{1}{2})$, then 
$$
\vec\beta\cdot \Pi=(\underbrace{1,\dots, 1}_{k+j_{1}}, \ k-j_{1}, \ \underbrace{1,\dots, 1}_{j_{2}}, \ k-j_{2}).
$$
which is shortened to $(k-j_{1},k-j_{2})$. With $j_{1}\neq j_{2}$, again by Corollary~\ref{c:football}, such a cone angle set is not admissible. 

Therefore in both cases, by applying Theorem~\ref{t:main} we get that such data $(d, \Pi)$ is not realizable.
\end{proof}

\begin{proof}[Proof of Proposition~\ref{p:3k}]
Take $\vec \beta=(\frac{1}{2}, \frac{2}{3}, \frac{2}{3})$ which is admissible by Proposition~\ref{p:Tro}. However, the lifted angle data on the branched cover is given by
$$
\vec\beta\cdot \Pi=\bigg(\frac{k-2}{2},\ \underbrace{1, \dots, 1}_{k+1},\ \underbrace{2, \dots, 2}_{2k}\bigg)
$$ 
which, after removing all the 1's, is shortened to 
$$
\vec \alpha=\bigg(\frac{k-2}{2},\ \underbrace{2, \dots, 2}_{2k}\bigg).
$$
Since we assume that $k$ is odd and $k\geq 3$, there exists an odd integer $z$ such that 
$$
\bigg|z-\bigg(\frac{k-2}{2}-1\bigg)\bigg|=\frac{1}{2}.$$ 
Then
the $\ell^{1}$ distance of $\vec \alpha-\vec 1$ and the odd integer point 
$$\vec z:=(z, \ \underbrace{1, \dots, 1}_{2k})$$ 
is given by
$
d_{\ell^{1}}(\vec \alpha-\vec 1, \vec z)=\frac{1}{2}<1.
$
So from Definition~\ref{d:angle}, $\vec\beta\cdot \Pi$ is not admissible for conical metrics.
\end{proof}

\begin{proof}[Proof of Proposition~\ref{p:rk}]
Take $\vec \beta=(1, \frac{1}{r}, \frac{1}{r})$, which is admissible by Proposition~\ref{p:Tro} (in fact it corresponds to a football with antipodal angle $2\pi\frac{1}{r}$). Compute the lifted angle data and we get
$$
\vec\beta\cdot \Pi=(2k-1,\ \underbrace{1, \dots, 1}_{rk+1}).
$$
After removing all the 1's and using that $k\geq 2$, we get
$$
\vec \alpha:=(2k-1)
$$
which is an integer vector of only one entry. Since $k\geq 2$, we obtain a spherical metric with one single angle $2\pi(2k-1)$. By looking at situation (c) in Definition~\ref{d:angle}, such $\vec \alpha$ is not admissible. Hence we get that such vector $\vec\beta\cdot \Pi$ is not admissible. 

Similarly for the second data, if we take $\vec \beta=(1, \frac{1}{r}, \frac{1}{r})$, then the lifted angle data is
$$
\vec \beta \cdot \Pi=(j_{1}, \ j_{2}, \ \underbrace{1, \dots, 1}_{rk}),
$$
which is shortened to
$$
(j_{1}, \ j_{2}), j_{1}\neq j_{2}.
$$
Hence by Corollary~\ref{c:football}, such a cone angle set is not admissible.
\end{proof}

If $d$ is not a prime, then $d$ admits at least one of decompositions: $d=2k$ or $d=rk$ for some odd $r$ with $k\geq 2$. So combining Proposition~\ref{p:2k} and Proposition~\ref{p:rk}, we immediately see that every such non-prime degree $d$ admits at least one exceptional branching data set. Hence we have Corollary~\ref{c:nprime}.

\end{document}